\newtheorem{proposition}{Proposition}
\newtheorem{lemma}{Lemma}
\DeclareMathOperator*{\minimize}{minimize}
\DeclareMathOperator*{\maximize}{maximize}
\newcommand{\st}{\text{subject to}}
\newcommand{\half}{\frac{1}{2}}
\newcommand{\real}[1]{\Re\left\{ #1 \right\}}
\newcommand{\imag}[1]{\Im\left\{ #1 \right\}}
\newcommand{\diag}[1]{\textsf{diag}\left\{ #1 \right\}}
\newcommand{\tr}[1]{{\rm tr}\left\{ #1 \right\}}
\newcommand{\R}{\mathbb{R}}
\newcommand{\C}{\mathbb{C}}
\renewcommand{\H}{\mathbb{H}}
\newcommand{\CN}{\mathcal{CN}}
\newcommand{\s}{\bm{s}}
\renewcommand{\b}{\bm{b}}
\newcommand{\x}{\bm{x}}
\newcommand{\f}{\bm{f}}
\newcommand{\smin}{\s_\text{min}}
\newcommand{\smax}{\s_\text{max}}
\newcommand{\y}{\bm{y}}
\newcommand{\z}{\bm{z}}
\renewcommand{\r}{\bm{r}}
\newcommand{\eye}{\bm{I}}
\newcommand{\Itilde}{\tilde{\eye}}
\newcommand{\T}{\bm{T}}
\newcommand{\X}{\bm{X}}
\newcommand{\F}{\bm{F}}
\newcommand{\A}{\bm{A}}
\newcommand{\B}{\bm{B}}
\newcommand{\eldar}[1]{{\color{blue}#1}}
\providecommand{\keywords}[1]{\textbf{Keywords:} #1}
\title{\bf Phase Retrieval from 1D Fourier Measurements: Convexity, Uniqueness, and Algorithms\ignorespaces
\thanks{Conference version of part of this work appears in ICASSP 2016 \cite{HuaEldSid:ICASSP2016}.}}
\author{
Kejun~Huang,
Yonina~C.~Eldar,
and
Nicholas~D.~Sidiropoulos
}
\begin{document}
\maketitle

\begin{abstract}
This paper considers phase retrieval from the magnitude of 1D over-sampled Fourier measurements, a classical problem that has challenged researchers in various fields of science and engineering. {\color{blue}We show} that an optimal vector in a least-squares sense can be found by solving a convex problem, thus establishing a hidden convexity in Fourier phase retrieval. {\color{blue}We then show} that the standard semidefinite relaxation approach yields the optimal cost function value (albeit not necessarily an optimal solution). A method is then derived to retrieve an optimal minimum phase solution in polynomial time.
Using these results, a new measuring technique is proposed which guarantees uniqueness of the solution, along with an efficient algorithm that can solve large-scale Fourier phase retrieval problems with uniqueness and optimality guarantees.
\end{abstract}
\keywords{
phase retrieval, over-sampled Fourier measurements, minimum phase, auto-correlation retrieval, semi-definite programming, alternating direction method of multipliers, holography.
}

\section{Introduction}
Phase retrieval seeks to recover a signal from the magnitudes of linear measurements \cite{PR-book}.
This problem arises in various fields, including crystallography \cite{crystallography}, microscopy, and optical imaging \cite{optics}, due to the limitations of the detectors used in these applications. Different types of measurement systems have been proposed, e.g., over-sampled Fourier measurements, short-time Fourier measurements, and random Gaussian, to name just a few (see \cite{shechtman2015phase,JEB16} for contemporary reviews), but over-sampled Fourier measurements are most common in practice. Two fundamental questions in phase retrieval are: i) is the signal uniquely determined by the noiseless magnitude measurements (up to inherent ambiguities such as global phase); and ii) is there an efficient algorithm that can provably compute an optimal estimate of the signal according to a suitable criterion?

This paper considers the 1D phase retrieval problem from over-sampled Fourier measurements. It is well known that there is no uniqueness in 1D Fourier phase retrieval,
i.e. there are multiple 1D signals with
the same Fourier magnitude. This is true even when we ignore trivial ambiguities which include a global phase shift, conjugate inversion and spatial shift.
Nonuniqueness also holds when the support of the signal is bounded
within a known range \cite{H64}. In addition, since the phase retrieval problem is nonconvex, there are no known algorithms that provably minimize the least-squares error in recovering the underlying signal \cite{convex}.

Existing methods that attempt to resolve the identifiability issue include introducing sparsity assumptions on the input signal~\cite{lu2011sparse,ohlsson2014conditions,Kishore:12,eldar_phase_2012}, taking multiple masked Fourier measurements~\cite{candes2014phase}, or using the short-time Fourier transform~\cite{jaganathan2015recovering,eldar2015sparse}.
Algorithmically, the most popular techniques for phase retrieval are based on alternating projections \cite{gerchberg1972practical,fienup,convex}, pioneered by Gerchberg and Saxton \cite{gerchberg1972practical}
and extended by Fienup \cite{fienup}. More recently, phase retrieval has been treated using semidefinite programming (SDP) and low-rank matrix recovery ideas \cite{candes2013phase,SESE11}.
Several greedy approaches to phase retrieval have also been introduced such as GESPAR \cite{SBE14,BE12}.
Semidefinite relaxation for phase retrieval, referred to as PhaseLift \cite{candes2013phase}, is known to recover the true underlying signal when the measurement vectors are Gaussian. However, in the case of Fourier measurements, as we are considering here, there are no known optimality results for this approach.

Despite the apparent difficulty in 1D Fourier phase retrieval, we establish in this paper that under certain conditions this problem can be solved optimally in polynomial time.
In particular, we first show that the least-squares formulation of this problem can always be optimally solved using semidefinite relaxation. Namely, methods such as PhaseLift do in fact optimize the nonconvex cost in the case of Fourier measurements. By slightly modifying the basic PhaseLift program we propose a new semidefinite relaxation whose solution is proven to be rank-one and to minimize the error. However, due to the nonuniqueness in the problem, the solution is not in general equal to the true underlying vector. To resolve this ambiguity, we propose a semidefinite relaxation that will always return the minimum phase solution which optimizes the least-squares cost. 

Based on this result, we suggest a new approach to measure 1D signals \eldar{which transforms} an arbitrary signal into a minimum phase counterpart by simply adding an impulse, so that identifiability is restored. We then recover the input using the proposed SDP. This measurement strategy resembles classic holography used commonly in optics \cite{gabor1948new}.
Finally, we propose an efficient iterative algorithm to recover the underlying signal without the need to resort to lifting techniques as in the SDP approach. Our method, \eldar{referred to as auto-correlation retrieval -- Kolmogorov factorization (CoRK),} 
is able to solve very large scale problems efficiently.
Comparing to existing methods that provide identifiability, the proposed approach is easy to implement (both conceptually and practically), requires a minimal number of measurements, and can always be solved to global optimality using efficient computational methods.

The rest of the paper is organized as follows. We introduce the problem in more detail in Section~\ref{sec:PF}.
In Section~\ref{sec:2} we reformulate phase retrieval by changing the variable from the signal itself to the correlation of the signal, and discuss two ways to characterize a correlation sequence, leading to convex optimization problems. Extracting signals from a given correlation sequence, known as \emph{spectral factorization}, is briefly reviewed in Section~\ref{sec:3}, where we also review the notion of \emph{minimum phase}. We then propose a new measurement system based on transforming an arbitrary signal into a minimum phase one, via a very simple impulse adding technique, in Section~\ref{sec:add_impulse}.  \eldar{The highly scalable CoRK} algorithm is developed in Section~\ref{sec:5} to recover the signal, providing the ability to optimally solve large-scale 1D Fourier phase retrieval problems very efficiently. We summarize the proposed method in Section~\ref{sec:6}, with some discussion on how to modify the approach if we know that the sought signal is real. Computer simulations are provided in Section~\ref{sec:7} and show superiority of our proposed techniques both in terms of accuracy and efficiency. We finally conclude the paper in Section~\ref{sec:8}.


Throughout the paper, indices for vectors and matrices start at $0$, so that the first entry of a vector $\x$ is $x_0$, and the upper-left-most entry of a matrix $\X$ is $X_{00}$.
The superscript \eldar{$(\cdot)^*$} denotes element-wise conjugate (without transpose), and \eldar{$(\cdot)^H$} denotes Hermitian transpose of a vector or a matrix.

\section{The Phase Retrieval Problem}
\label{sec:PF}

In the phase retrieval problem, we are interested in estimating a signal $\x\in\C^N$ from the squared magnitude of its Fourier transform.
The \emph{discrete-time Fourier transform} (DTFT) of a vector $\x\in\C^N$ is a trigonometric polynomial in $\omega$ defined as
\[
X(e^{j\omega}) = \sum_{n=0}^{N-1}x_n e^{-j\omega n},
\]
and is periodic with period $2\pi$. In practice it is easier to obtain samples from the continuous function $X(e^{j\omega})$, which leads to the $M$-point \emph{discrete Fourier transform} (DFT) of $\x$,
{\color{blue}where $M\geq N$,} 
if we sample at points
\[
\omega = 0, \frac{2\pi}{M}, ..., \frac{2\pi(M-1)}{M}.
\]
This operation is equivalent to the matrix-vector multiplication $\F_M\x$, where $\F_M$ is the first $N$ columns of the $M$-point DFT matrix, i.e.,
\[
\F_M =
\begin{bmatrix}
1 & 1 & 1 & \cdots & 1 \\
1 & \phi & \phi^2 & \cdots & \phi^{N-1} \\
\vdots & \vdots & \vdots & \ddots & \vdots \\
1 & \phi^{M-1} & \phi^{2(M-1)} & \cdots & \phi^{(N-1)(M-1)} \\
\end{bmatrix},
\]
with $\phi=e^{-j2\pi/M}$. The matrix-vector multiplication $\F_M\x$ can be carried out efficiently via the fast Fourier transform (FFT) algorithm with complexity $O(M\log M)$, unlike the general case which takes $O(MN)$ flops to compute.

With this notation, our measurements are given by
\begin{equation}\label{prob}
\b= |\F_M\x|^2+\bm{w},
\end{equation}
where $\bm{w}$ is a noise vector and $|\cdot|^2$ is taken element-wise.
To recover $\x$ from $\b$ we consider a least-squares cost (which coincides with the maximum likelihood criterion assuming Gaussian noise):
\begin{equation}\label{prob:1}
\minimize_{\x\in\C^N}~~ \Big\| \b - |\F_M\x|^2 \Big\|^2.
\end{equation}
The most popular methods for solving (\ref{prob:1}) are the Gershburg-Saxton (GS) and Fienup's algorithms. Both techniques start with the noiseless scenario $\b = |\F_M\x|^2$, and reformulate it as the following feasibility problem by increasing the dimension of $\x$ to $M$ and then imposing an additional \emph{compact support} constraint:
\begin{align*}
\text{find}~~ & \x\in\C^M \\
\text{such that}~~ & \b = |\F_M\x|^2 \\
 & x_n = 0, ~~n = N, N+1, ..., M-1.
\end{align*}
It is easy to derive projections onto the two individual sets of equality constraints, but not both. Therefore, one can apply alternating projections, which leads to the GS algorithm, or Dykstra's alternating projections, which results in Fienup's algorithm when the step size is set to one~\cite{elser2013direct}. 

Due to the non-convexity of the quadratic equations, neither algorithm is guaranteed to find a solution; nonetheless, Fienup's approach has been observed to work successfully in  converging to a point that satisfies both set of constraints. When the measurements are corrupted by noise, Fienup's algorithm does not in general converge. An alternative interpretation of the GS algorithm shows that it monotonically decreases the cost function of the following optimization problem (which is different from \eqref{prob:1}),
\begin{equation}\label{prob:GS}
\begin{aligned}
\minimize_{\x\in\C^N,\bm{\psi}\in\C^M}~ & \left\|\diag{\sqrt{\b}}\bm{\psi}-\F_M\x\right\|^2 \\
\st~~ & |\psi_m| = 1, ~~m = 0, 1, ..., M-1.
\end{aligned}
\end{equation}

More recently, the general phase retrieval problem has been recognized as a non-convex \emph{quadratically constrained quadratic program} (QCQP), for which the prevailing approach is to use semidefinite relaxation \cite{luo2010semidefinite} to obtain a lower bound on the optimal value of (\ref{prob:1}). In the field of phase retrieval, this procedure is known as {\em PhaseLift} \cite{candes2013phase}. Specifically, under a Gaussian noise setting, PhaseLift solves 
\begin{equation}\label{prob:phaselift}
\minimize_{\X\in\H_+^{N}}~~ \sum_{m=0}^{M-1}\left(b_m-\tr{\f_m^{}\f_m^H\X}\right)^2 + \lambda\tr{\X},
\end{equation}
where $\H_+^{N}$ denotes the set of Hermitian positive semidefinite matrices of size $N\times N$, $\f_m^H$ is the $m$th row of $\F_M$, and the term $\lambda\tr{\X}$ is used to encourage the solution to be low-rank. Problem (\ref{prob:phaselift}) can be cast as an SDP and solved in polynomial time. If the solution of (\ref{prob:phaselift}), denoted as $\X_\star$, turns out to be rank one, then we also obtain the optimal solution of the original problem~(\ref{prob:1}) by extracting the rank one component of $\X_\star$. However, for general measurement vectors PhaseLift is not guaranteed to yield a rank one solution, especially when the measurement $\b$ is noisy. In that case PhaseLift resorts to sub-optimal solutions, for example by taking the first principal component of $\X_\star$, possibly refined by a traditional method like the GS algorithm.
An SDP relaxation for the alternative formulation (\ref{prob:GS}) is proposed in \cite{waldspurger2015phase}, and referred to as \emph{PhaseCut}.

In the next section we show that despite the nonconvexity of (\ref{prob:1}), we can find an optimal solution in polynomial time using semidefinite relaxation.
Since there is no uniqueness in 1D phase retrieval, there are many possible solutions even in the noise free setting. Among all solutions, we extract the minimum phase vector that minimizes the least-squares error. We will also suggest an alternative to the SDP formulation, \eldar{that avoids squaring the number of variables like PhaseLift.} Next we will show how any vector can be modified to be minimum phase by adding a sufficiently large impulse to it. This then paves the way to recovery of arbitrary 1D signals efficiently and provably optimally from their Fourier magnitude.

\section{Convex reformulation}\label{sec:2}
In this section we show how (\ref{prob:1}) can be optimally solved in polynomial time, despite its non-convex formulation. Similar results have been shown in the application of multicast beamforming under far-field line-of-sight propagation conditions~\cite{karipidis2007far}, and more generally for non-convex QCQPs with Toeplitz quadratics~\cite{konar2015hidden,dumitrescu2007positive}.

\subsection{\eldar{Phase retrieval using auto-correlation}}
\eldar{We begin by reformulating the Fourier phase retrieval problem in terms of the auto-correlation function.}
Consider the $m$th entry of  $|\F_M\x|^2$:
\begin{align*}
|\f_m^H\x|^2 & = \sum_{n=0}^{N-1}\phi^{nm} x_n^{} \sum_{\nu=0}^{N-1}\phi^{-\nu m} x_\nu^* \\
& = \sum_{n=0}^{N-1}\sum_{\nu=0}^{N-1}x_n^{} x_\nu^* \phi^{(n-\nu)m} \\
& = \sum_{n=0}^{N-1}\sum_{k=n+1-N}^{n} x_n^{} x_{n-k}^* \phi^{km} \\
& = \sum_{k=1-N}^{N-1}\phi^{km} \sum_{n=\max(k,0)}^{\min(N-1+k,N-1)}x_n^{} x_{n-k}^*\\
& = \sum_{k=1-N}^{N-1}\phi^{km} r_k^{},
\end{align*}
where
\begin{align}\label{eq:r}
r_k^{} & =\sum_{n=\max(k,0)}^{\min(N-1+k,N-1)}x_n^{} x_{n-k}^*, \\
	&~~ k = 1-N,...,-1,0,1,...,N-1,\nonumber
\end{align}
is the $k$-lag auto-correlation of $\x$. Let us define
\[
\tilde{\r} = [~r_{1-N}~...~r_{-1}~r_0~r_1~...~r_{N-1}~]^T.
\]

We first observe that $|\f_m^H\x|^2$, originally quadratic with respect to $\x$, is now linear in $\tilde{\r}$. Moreover, by definition
$r_k^{} = r_{-k}^*$. Removing the redundancy, we let
\[
\r = [~r_0~r_1~...~r_{N-1}~]^T,
\]
and write
\[
|\F_M\x|^2 = \real{\F_M\Itilde\r},
\]
where $\Itilde=\diag{[~1~2~2~...~2~]}$, and $\real{\cdot}$ takes the real part of its argument. We can then rewrite (\ref{prob:1}) as
\begin{equation}\label{prob:conv0}
\begin{aligned}
\minimize_{\r\in\C^N}~~ & \left\|\b - \real{\F_M\Itilde\r}\right\|^2 \\
\st~~ & \r \text{ is a finite auto-correlation sequence.}
\end{aligned}
\end{equation}
The abstract constraint imposed on $\r$ in (\ref{prob:conv0}) is to ensure that there exists a vector $\x\in\C^N$ such that (\ref{eq:r}) holds.

The representation in (\ref{eq:r}) of the auto-correlation sequence is nonconvex {\color{blue}(in $\r$ and $\x$ jointly)}. In the next subsection we consider convex reformulations of this constraint based on \cite{alkire2002convex}.
In Section~\ref{sec:3} we discuss how to obtain $\x$ from $\r$ in a computationally efficient way.

\subsection{Approximate characterization of a finite auto-correlation sequence}

Denote the DTFT of $\tilde{\r}$ by
\[
R(e^{j\omega}) = \sum_{k=1-N}^{N-1}r_k e^{-j\omega k}.
\]
Since $\tilde{\r}$ is conjugate symmetric, $R(e^{j\omega})$ is real for all $\omega\in[0,2\pi]$. The sequence $\tilde{\r}$ (or equivalently $\r$) is a finite auto-correlation sequence if and only if~\cite{krein1977markov}
\begin{equation}\label{eq:r1}
R(e^{j\omega}) \geq 0, ~~ \omega\in[0,2\pi],
\end{equation}
which is a collection of infinitely many linear inequalities. A natural way to approximately satisfy (\ref{eq:r1}) is to sample this infinite set of inequalities, and replace (\ref{eq:r1}) by a finite (but large) set of $L$ linear inequalities
\[
R(e^{j2\pi l/L}) \geq 0, \quad l = 0,1,...,L-1.
\]
In matrix form, these constraints can be written as (similar to the expression for $|\F_M\x|^2$ that we derived before)
\begin{equation*}
\real{\F_L\Itilde\r} \geq 0,
\end{equation*}
where $\F_L$ is the first $N$ columns of the $L$-point DFT matrix.

Using this result, we can approximately formulate problem~(\ref{prob:conv0}) as
\begin{equation}\label{prob:conv1}
\begin{aligned}
\minimize_{\r\in\C^N}~~ & \left\|\b - \real{\F_M\Itilde\r}\right\|^2 \\
\st~~ & \real{\F_L\Itilde\r} \geq 0,
\end{aligned}
\end{equation}
with a sufficiently large $L$.
In practice, the approximate formulation works very well for $L\geq 20N$, a modest (and typically a constant times) increase in the signal dimension. However, it is not exact---one can satisfy the constraints in~(\ref{prob:conv1}), yet still have $R(e^{j\omega})<0$ for some $\omega$.

\subsection{Exact parameterization of a finite auto-correlation sequence}

It turns out that it is possible to characterize the infinite set of inequalities (\ref{eq:r1}) through a finite representation, via an auxiliary positive semidefinite matrix. One way is to use a $N\times N$ positive semidefinite matrix to parameterize $\r${\color{blue}\cite{dumitrescu2007positive}}
\begin{align*}
r_k & = \tr{\T_k\X}, ~~k = 0,1,...,N-1,\\
\X  & \succeq 0,
\end{align*}
where $\T_k$ is the $k$th elementary Toeplitz matrix of appropriate size, with ones on the $k$th sub-diagonal, and zeros elsewhere, and $\T_0=\eye$. Another way, which can be derived from the Kalman-Yakubovich-Popov (KYP) lemma~\cite{wu1996fir}, uses a $(N-1)\times(N-1)$ matrix $\bm{P}$ to characterize $\r$ as a finite correlation sequence
\[
\begin{bmatrix}
r_0 & \r_{1:N-1}^H \\
\r_{1:N-1} & \bm{P}
\end{bmatrix}
-
\begin{bmatrix}
\bm{P} & \bm{0} \\
\bm{0} & 0
\end{bmatrix}
\succeq 0,
\]
where $\r_{1:N-1} = [~r_1~r_2~...~r_{N-1}~]^T$. It can be shown that the two formulations are equivalent~\cite{dumitrescu2001parameterization}.

Adopting the first characterization, we may rewrite (\ref{prob:conv0}) as
\begin{equation}\label{prob:conv2}
\begin{aligned}
\minimize_{\r\in\C^N,\X\in\H^N_+}~~ & \left\|\b - \real{\F_M\Itilde\r}\right\|^2 \\
\st~~~~ & r_k = \tr{\T_k\X}, ~~k = 0, 1, ..., N-1.
\end{aligned}
\end{equation}
It is easy to see that we can actually eliminate the variable $\r$ from (\ref{prob:conv2}), ending up with the PhaseLift formulation (\ref{prob:phaselift}) without the trace regularization.
This result is proven in Appendix~\ref{appx:sdp}.

The implication behind the above analysis is that, even though PhaseLift does not produce a rank one solution in general\footnote{In fact, it has been shown that if the interior-point algorithm is used to solve an SDP, it will always generate a solution that is of maximal rank~\cite{luo2010semidefinite}.}, there always exists a point in $\C^N$ that attains the cost provided by the SDP relaxation. In other words, the seemingly non-convex problem (\ref{prob:1}) can be solved in polynomial time.

The trace parameterization based formulation (\ref{prob:conv2}) exactly characterizes (\ref{prob:conv0}), with the price that now the problem dimension is $O(N^2)$, a significant increase comparing to that of (\ref{prob:conv1}). In Section~\ref{sec:5} we will derive an efficient alternative based on ADMM. In the next section we show how to extract a vector $\x$ from the auto-correlation $\r$ that solves (\ref{prob:conv2}).

\section{Spectral factorization}\label{sec:3}
After we solve (\ref{prob:conv0}), either approximately via (\ref{prob:conv1}) or exactly by (\ref{prob:conv2}), we obtain the auto-correlation of the optimal solution of the original problem (\ref{prob:1}). The remaining question is how to find a vector $\x$ that generates such an auto-correlation, as defined in (\ref{eq:r}). This is a classical problem known as \emph{spectral factorization} (SF) in signal processing and control. There exist extensive surveys on methods and algorithms that solve this problem, e.g., \cite{sayed2001survey}, \cite[Appendix]{wu1999fir}, and \cite[Appendix~B]{dumitrescu2007positive}. In this section, we first derive an algebraic solution for SF, which also explains the non-uniqueness of 1D Fourier phase retrieval, and reviews the notion of \emph{minimum phase}. We then \eldar{survey} two practical methods for SF, which we will rely on when developing \eldar{CoRK,} a highly scalable phase-retrieval algorithm.

\subsection{Algebraic solution}\label{sec:sf}
The DFT of an arbitrary signal $\x\in\C^N$ can be obtained by sampling its $z$-transform on the unit circle $|z|=1$. Let $X(z)$ be the $z$-transform of $\x$, which is a polynomial of order $N-1$. It can be written in factored form as
\begin{equation}\label{eq:zt}
X(z) = \sum_{n=0}^{N-1}x_nz^{-n} = x_0\prod_{n=1}^{N-1}(1-\xi_nz^{-1}),
\end{equation}
where $\xi_1, ..., \xi_{N-1}$ are the zeros (roots) of the polynomial $X(z)$. The quadratic measurements can similarly be interpreted as sampled from the $z$-transform of $\tilde{\r}$ defined as
\begin{align*}
R(z) & = |X(z)|^2 = X(z^{})X^*(1/z^*) \\
& = |x_0|^2\prod_{n=1}^{N-1}(1-\xi_n^{}z^{-1})(1-\xi_n^*z^{}).
\end{align*}

As we can see, the zeros of $R(z)$ always come in conjugate reciprocal pairs. Therefore, given $R(z)$, we cannot determine whether a zero $\xi$ or its conjugate reciprocal $(\xi^*)^{-1}$ is a root of $X(z)$, which is the reason that $\x$ cannot be reconstructed from $R(z)$. In other words, for a given signal $\x$, we can find the zeros of its {$z$-transform} as in (\ref{eq:zt}), take the conjugate reciprocal of some of them, and then take the inverse $z$-transform to obtain another signal $\y$\eldar{\cite{beinert2015ambiguities}}. If we re-scale $\y$ to have the same $\ell_2$ norm as $\x$, then it is easy to verify that
\[
|\F_M\x|^2 = |\F_M\y|^2,
\]
no matter how large $M$ is, even though clearly $\x$ and $\y$ are not equal.

Traditionally, this problem is often seen in design problems where uniqueness is not important, e.g., FIR filter design~\cite{wu1999fir} and far-field multicast beamforming~\cite{karipidis2007far}. There, it is natural (from the maximal energy dissipation point of view) to pick the zeros to lie within the unit circle\footnote{If a zero $\xi$ lie exactly on the unit circle, then $(\xi^*)^{-1}=\xi$, meaning $R(z)$ has a double root at that point, so we simply pick one as the zero for the signal.}, yielding a so-called {\em minimum phase} signal.

The above analysis provides a direct method for SF. For a given auto-correlation sequence $\r$, we first calculate the roots of the polynomial
\[
R(z) = r_0 + \sum_{k=1}^{N-1}\left(r_k^{}z^{-k}+r_k^*z^k\right).
\]
Because $\r$ is a valid correlation sequence, the roots come in conjugate reciprocal pairs. Therefore, we can pick the $N-1$ roots that are inside the unit circle, expand the expression, and then scale it to have $\ell_2$ norm equal to $\sqrt{r_0}$.

Numerically, the roots of $R(z)$ can be found by calculating the eigenvalues of the following companion matrix:
\[
\begin{bmatrix}
0 & 1 & 0 & \cdots & 0 \\
0 & 0 & 1 &  & \vdots \\
\vdots & \vdots & & \ddots & 0 \\
0 & 0 & \cdots & 0 & 1 \\
-\frac{r_{N-1}}{r^*_{N-1}} & \cdots & -\frac{r_0}{r^*_{N-1}} & \cdots & -\frac{r^*_{N-2}}{r^*_{N-1}} \\
\end{bmatrix}.
\]
However, when expanding the factored form to get the coefficients of the polynomial, the procedure is very sensitive to roundoff error, which quickly becomes significant as $N$ approaches $64$. 

\subsection{SDP-based method for SF}\label{sec:sdp}
To obtain a more stable method numerically, we can use an SDP approach to SF.

For a valid correlation $\r$, it is shown in \cite[Chapter~2.6.1]{dumitrescu2007positive} that the solution of the following SDP
\begin{equation}\label{prob:sdp4sf}
\begin{aligned}
\maximize_{\X\in\H_+^{N}}~~ & X_{00} \\
\st~~~ & r_k = \tr{\T_k\X},\\
		& k=0,1,...,N-1,
\end{aligned}
\end{equation}
is always rank one. In addition, its rank one component generates the given correlation and is minimum phase. Algorithms for SDP are numerically stable, although the complexity could be high if we use a general-purpose SDP solver.

The constraints in (\ref{prob:sdp4sf}) are the same as the convex reformulation (\ref{prob:conv2}), which according to Appendix~\ref{appx:sdp} is equivalent to PhaseLift without the trace regularization.
Therefore, we may consider instead solving the problem
\begin{equation}\label{prob:phaselift1}
\minimize_{\X\in\H_+^{N}}~~ \sum_{m=0}^{M-1}\left(b_m-\tr{\f_m^{}\f_m^H\X}\right)^2 - \lambda X_{00}.
\end{equation}
In Appendix~\ref{appx:phaselift-sf} we show that for a suitable value of $\lambda$, the solution of (\ref{prob:phaselift1}) is guaranteed to be rank one, and it attains the optimal least-squares error.

The formulation (\ref{prob:phaselift1}) allows to use customized SDP solvers to obtain a solution efficiently. For example, PhaseLift was originally solved using TFOCS~\cite{becker2011templates}, which applies various modern first-order methods to minimize convex functions composed of a smooth part and a non-smooth part that has a simple proximity operator. For problem (\ref{prob:phaselift1}), the cost function is the smooth part, and the non-smooth part is the indicator function for the cone of semidefinite matrices.

It is now widely accepted that the trace of a semidefinite matrix, or nuclear norm for a general matrix, encourages the solution to be low rank~\cite{recht2010guaranteed}. However, interestingly, in our setting, the correct regularization is in fact $-\lambda X_{00}$ which guarantees the solution to be rank one for an appropriate choice of $\lambda$.

\subsection{Kolmogorov's method for SF}\label{sec:kolmo}
Kolmogorov proposed the following method for SF that is both efficient and numerically stable \cite{wu1996fir}. In what follows, we assume that $R(z)$ contains no zeros on the unit circle.

Consider taking the logarithm of the $z$-transform of $\x$. Every zero (and pole, which we do not have since $\x$ is finite length) of $X(z)$ then becomes a pole of $\log X(z)$. Assuming the roots of $X(z)$ lie strictly inside the unit circle, this implies that there exists a region of convergence (ROC) for $\log X(z)$ that contains the unit circle $|z|=1$ and infinity $|z|=\infty$. This in turn means that $\log X(z)$ is unilateral
\[
\log X(z) = \sum_{n=0}^{\infty}\alpha_nz^{-n}.
\]
Evaluating $\log X(z)$ at $z=e^{j\omega}$, yields
\begin{align*}
\real{\log X(e^{j\omega})} &= ~~\sum_{n=0}^{\infty}\alpha_n\cos\omega n, \\
\imag{\log X(e^{j\omega})} &=  -\sum_{n=0}^{\infty}\alpha_n\sin\omega n,
\end{align*}
implying that $\real{\log X(e^{j\omega})}$ and $\imag{\log X(e^{j\omega})}$ are Hilbert transform pairs. Moreover, since we are given $R(z)=|X(z)|^2$, we have that
\[
\real{\log X(e^{j\omega})}=\frac{1}{2}\log R(e^{j\omega}).
\]
We can therefore calculate $\imag{\log X(e^{j\omega})}$ from the Hilbert transform of $\real{\log X(e^{j\omega})}$, and reversely reconstruct a signal $\x$ that generates the given correlation $\r$ and is minimum phase.

In practice, all of the aforementioned transforms may be well approximated by a DFT with sufficiently large length~$L$. The detailed procedure of Kolmogorov's method then becomes:
\begin{enumerate}
\item compute the real part
\[
\bm{\gamma} = \frac{1}{2}\log\real{\F_L\Itilde\r};
\]
\item compute the imaginary part by taking the Hilbert transform of $\bm{\gamma}$, approximated by a DFT
\begin{align*}
\bm{\phi} &= \F_L\bm{\gamma},\\
\varphi_n &= \begin{cases}
0, & n = 0, L/2, \\
-j\phi_n, & n = 1,2,...,L/2-1, \\
 j\phi_n, & n = L/2+1, ..., L-1,
\end{cases}\\
\bm{\eta} &= \frac{1}{L}\F_L^H\bm{\varphi};
\end{align*}
\item compute $(1/L)\F_L^H\exp(\bm{\gamma}-j\bm{\eta})$, and take the first $N$ elements as the output.
\end{enumerate}

Kolmogorov's method generates a valid output as long as in the first step we have that $\real{\F_L\Itilde\r}\geq 0$, so that $\bm{\gamma}$ is real. This fits well with the approximate formulation (\ref{prob:conv1}), since if we choose the same $L$ for both (\ref{prob:conv1}) and Kolmogorov's method, then it is guaranteed to be able to run without encountering a syntax error. Obviously, to obtain a solution with high accuracy, we need $L$ to be sufficiently large; empirically we found $L\geq 20N$ to be good enough in practice. Computationally, this approach requires computing four $L$-point (inverse-) FFTs with complexity $O(L\log L)$, a very light computation burden even for large $L$.

\section{A new measurement system}\label{sec:add_impulse}
So far we have seen that an arbitrary signal cannot be uniquely determined from the magnitude of its over-sampled 1D Fourier measurements, because there always exists a minimum phase signal that yields the same measurements. Nonetheless, a least-squares estimate can be efficiently calculated by solving either (\ref{prob:conv1}) or (\ref{prob:conv2}) followed by spectral factorization, leading to an optimal solution that is minimum phase. If the true signal $\x$ is indeed minimum phase, then we can optimally estimate it in polynomial-time. However, the minimum phase property is not a natural assumption to impose on a signal in general.

We propose to resolve this ambiguity by deliberately making the signal minimum phase before taking the quadratic measurements, so that the augmented signal is uniquely identified in polynomial time. The true signal is then recovered easily by reversing this operation.

For an arbitrary complex signal $\s$, we \eldar{suggest adding} $\delta$ in front of $\s$ before taking measurements, where $\delta$ satisfies that $|\delta|\!\geq\!\|\s\|_1$. Denote the augmented signal as $\smin$. The following proposition shows that $\smin$ is minimum phase.
\begin{proposition}\label{thm:minphase}
Consider an arbitrary complex signal $$\s=[~s_0~s_1~...~s_{N-1}~]^T.$$ Then the augmented signal
\begin{equation}\label{eq:smin}
\smin=[~\delta~s_0~...~s_{N-1}~]^T,
\end{equation}
where $|\delta|\geq\|\s\|_1$, is minimum phase.
\end{proposition}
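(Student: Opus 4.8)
The plan is to translate ``minimum phase'' into a statement about the location of the zeros of the $z$-transform of $\smin$, and then to prove that none of those zeros can lie strictly outside the unit circle. Writing $S(z)=\sum_{k=0}^{N-1}s_k z^{-k}$ for the $z$-transform of $\s$, the augmented signal in (\ref{eq:smin}) has $z$-transform $S_{\min}(z)=\delta + z^{-1}S(z)$, a polynomial of degree $N$ in $z^{-1}$. By the factorization in (\ref{eq:zt}) and the discussion of minimum phase in Section~\ref{sec:sf}, it suffices to show that every root $z_\star$ of $S_{\min}(z)$ satisfies $|z_\star|\leq 1$; equivalently, that $S_{\min}(z)\neq 0$ whenever $|z|>1$.

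First I would estimate $S_{\min}(z)$ on the region $|z|\geq 1$ using the triangle inequality. The dominant term is $\delta$, and the remainder is controlled by $|z^{-1}S(z)|\leq\sum_{k=0}^{N-1}|s_k|\,|z|^{-(k+1)}$. Since $|z|\geq 1$ forces $|z|^{-(k+1)}\leq 1$ for every $k\geq 0$, this sum is at most $\sum_{k=0}^{N-1}|s_k|=\|\s\|_1$, so that $|S_{\min}(z)|\geq|\delta|-\|\s\|_1\geq 0$ by the hypothesis $|\delta|\geq\|\s\|_1$. Sharpening this for $|z|>1$: there $|z|^{-(k+1)}\leq|z|^{-1}<1$, hence the remainder is strictly bounded above by $\|\s\|_1\leq|\delta|$, and therefore $S_{\min}(z)$ cannot vanish. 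This rules out any zero strictly outside the unit circle, which is exactly the minimum-phase property.

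The only delicate point -- and the step I would watch most carefully -- is the boundary case $|z|=1$ together with equality $|\delta|=\|\s\|_1$, where the estimate yields only $|S_{\min}(z)|\geq 0$ and a zero on the unit circle is not excluded. This is harmless: the convention adopted in Section~\ref{sec:sf} (see the footnote there) treats a zero on the unit circle as admissible for a minimum-phase signal, since such a zero $\xi$ coincides with its conjugate reciprocal $(\xi^*)^{-1}$ and is hence shared by $\smin$ and its conjugate-reciprocal ambiguities. I would therefore record that the strict hypothesis $|\delta|>\|\s\|_1$ places all zeros in the open disk (strictly minimum phase), while equality still produces a valid minimum-phase signal. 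As a cross-check I would mention the alternative route via Rouch\'e's theorem, applied to the dominant term $\delta$ against $z^{-1}S(z)$ on $|z|=1$ after clearing denominators by multiplying through by $z^N$ and counting the roots of $z^N S_{\min}(z)$ inside the disk; it gives the same count, but the direct modulus bound above is cleaner and avoids the degree bookkeeping.
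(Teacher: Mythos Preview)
Your proof is correct and follows essentially the same route as the paper: both arguments show that every zero of the $z$-transform of $\smin$ lies in the closed unit disk via a modulus bound. The paper packages this by multiplying through by $z^{N}/\delta$ to obtain a monic polynomial and then invoking a cited root bound (Lemma~\ref{lmm:roots}, $|\zeta|\leq\max\{1,\sum|c_n|\}$), whereas you prove the same inequality directly with the triangle inequality on $\delta+z^{-1}S(z)$; these are the same computation in different clothing, and your explicit handling of the $|z|=1$ boundary case is a welcome addition that the paper leaves to its earlier footnote.
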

\begin{proof}
To establish the result we need to show that the zeros of the $z$-transform of~$\smin$
\[
\delta + s_0z^{-1} + ... + s_{N-1}z^{-N},
\]
or equivalently the roots of the polynomial
\[
V(z)=z^{N} + \frac{s_0}{\delta}z^{N-1} + ... + \frac{s_{N-1}}{\delta},
\]
all lie inside the unit circle.
To this end, we rely on the following lemma.
\begin{lemma}\label{lmm:roots}
\cite[Theorem~1]{hirst1997bounding} Let $\zeta$ be a zero of the polynomial
\[
z^N + c_{N-1}z^{N-1} + ... + c_1z + c_0,
\]
where $c_0,...,c_{N-1}\in\C$ and $N$ is a positive integer. Then
\[
|\zeta| \leq \max\left\{1,\sum_{n=0}^{N-1}|c_n|\right\}.
\]
\end{lemma}
Substituting the coefficients of $V(z)$ into the inequality in Lemma~\ref{lmm:roots} establishes the result.
\end{proof}

Conceptually, the approach we propose is very simple: all we need is a way to over-estimate the $\ell_1$ norm of the target signal, and a mechanism to insert an impulse in front of the signal before taking quadratic measurements. For example, if we assume each element in $\s$ comes from a complex Gaussian distribution with variance $\sigma^2$, then we know that the probability that the magnitude of one element exceeds $3\sigma$ is almost negligible; therefore, we can simply construct $\smin$ by setting $\delta=3\sigma N$, resulting in $\smin$ being minimum phase with very high probability.

Our approach can be used with a number of measurements $M$, as small as $2N$. Indeed, consider the equivalent reformulation (\ref{prob:conv0}), in which the measurements $\b$ are linear with respect to $\r$. From elementary linear algebra, we know that $N$ complex numbers can be uniquely determined by as few as $2N$ real linearly independent measurements, even without the specification that $\r$ is a finite correlation sequence. From a unique $\r$, a unique minimum phase $\smin$ can be determined using SF.

The impulse may also be appended at the end of the signal $\s$, resulting in a \emph{maximum phase} signal, meaning all the zeros of its $z$-transform are outside the unit circle.
\begin{proposition}\label{thm:maxphase}
For an arbitrary complex signal $$\s=[~s_0~s_1~...~s_{N-1}~]^T,$$ the augmented signal $$\smax=[~s_0~...~s_{N-1}~\delta~]^T,$$ where $|\delta|\geq\|\s\|_1$, is maximum phase.
\end{proposition}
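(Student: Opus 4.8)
The plan is to reduce Proposition~\ref{thm:maxphase} to Proposition~\ref{thm:minphase} by exploiting the standard duality between minimum- and maximum-phase signals under time reversal, so that no fresh root-bounding computation is needed. First I would write down the polynomial whose roots are the finite nonzero zeros of the $z$-transform of $\smax$. Clearing the $z^{-N}$ factor from
\[
s_0 + s_1 z^{-1} + \cdots + s_{N-1}z^{-(N-1)} + \delta z^{-N},
\]
these zeros coincide with the roots of
\[
P(z) = s_0 z^N + s_1 z^{N-1} + \cdots + s_{N-1}z + \delta,
\]
and the goal is to show every such root $\zeta$ satisfies $|\zeta|\geq 1$.

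Next I would relate $P$ to a minimum-phase polynomial via reversal. Consider the reversed signal $\tilde{\s}=[~s_{N-1}~\cdots~s_0~]^T$, which has the same $\ell_1$ norm as $\s$, so $|\delta|\geq\|\tilde{\s}\|_1$. Applying Proposition~\ref{thm:minphase} to $\tilde{\s}$, the augmented vector $[~\delta~s_{N-1}~\cdots~s_0~]^T$ is minimum phase; equivalently, the polynomial
\[
Q(z)=\delta z^N + s_{N-1}z^{N-1} + \cdots + s_1 z + s_0
\]
has all of its roots inside the closed unit circle. The key algebraic observation is the reciprocity $Q(z)=z^N P(1/z)$, so $\beta$ is a nonzero root of $Q$ if and only if $1/\beta$ is a root of $P$. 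Since $Q$ has degree exactly $N$ (its leading coefficient is $\delta\neq 0$) and all its roots $\beta$ satisfy $|\beta|\leq 1$, every corresponding root $\zeta=1/\beta$ of $P$ satisfies $|\zeta|\geq 1$, which is precisely the maximum-phase condition.

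I expect the main idea above to be immediate once the reciprocity is spotted; the actual obstacle is the bookkeeping at the degenerate boundary, which I would dispatch with two short remarks. First, if $s_0=0$ the leading coefficient of $P$ vanishes and $\deg P<N$; the ``missing'' roots are then accounted for by roots of $Q$ at $\beta=0$, corresponding to zeros of $\smax$ pushed to infinity, so the reciprocal correspondence still places every finite nonzero zero of $\smax$ outside the unit circle. Second, because Lemma~\ref{lmm:roots} yields only the non-strict bound $|\beta|\leq\max\{1,\|\s\|_1/|\delta|\}=1$, a root of $Q$ may in principle lie on $|z|=1$; this is exactly the boundary situation treated in the footnote of Section~\ref{sec:sf}, where a unit-circle zero is its own conjugate reciprocal, and under $\beta\mapsto 1/\beta$ it maps to a unit-circle zero of $P$, leaving the conclusion $|\zeta|\geq 1$ intact. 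With these two remarks the reversal argument goes through verbatim.
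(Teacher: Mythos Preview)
Your proposal is correct and is essentially the same argument as the paper's: both exploit the reciprocity $z\mapsto 1/z$ to reduce the maximum-phase claim to a root bound inside the unit circle, with the paper applying Lemma~\ref{lmm:roots} directly to the polynomial $s_0+s_1\tilde{z}+\cdots+\delta\tilde{z}^N$ obtained by the substitution $\tilde{z}=z^{-1}$, whereas you route the same bound through Proposition~\ref{thm:minphase} applied to the reversed signal. Your boundary bookkeeping ($s_0=0$, roots on $|z|=1$) is more explicit than the paper's, but the core idea is identical.
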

\begin{proof}
The $z$-transform of $\smax$ is
\[
S(z)=s_0 + s_1z^{-1} + ... + s_{N-1}z^{1-N} + \delta z^{-N}.
\]
Let $\tilde{z}=z^{-1}$. Then
\[
S(\tilde{z})=s_0 + s_1\tilde{z} + ... + s_{N-1}\tilde{z}^{N-1} + \delta\tilde{z}^{N},
\]
which is a polynomial in $\tilde{z}$ and, according to Lemma~\ref{lmm:roots}, has all its roots inside the unit circle. Taking the reciprocal, this means that all the zeros of the $z$-transform of $\smax$ lie outside the unit circle.
\end{proof}
It is easy to show that for a maximum phase signal $$\smax=[~s_0~...~s_{N-1}~\delta~]^T,$$ its equivalent minimum phase signal is
\[
[~\delta^*~s_{N-1}^*~s_{N-2}^*~...~s_0^*~]^T.
\]
This means that if we measure the intensity of the Fourier transform of $\smax$ instead, we can still uniquely recover $\s$ via solving (\ref{prob:conv0}) followed by SF, take the conjugate reversal of the solution, and then delete the first element to obtain an estimate.

Furthermore, from the analysis above, we can extend our measuring technique so that the impulse is added away from the signal:
\begin{equation}\label{eq:holography1}
[~\delta~0~...~0~s_0~...~s_{N-1}~]^T,
\end{equation}
or
\begin{equation}\label{eq:holography2}
[~s_0~...~s_{N-1}~0~...~0~\delta~]^T,
\end{equation}
with an arbitrary number of zeros between $\delta$ and $\s$. The resulting augmented signal is still minimum/maximum phase, thus can be uniquely identified from the intensity of its Fourier transform. However, in this case more measurements are required for identifiability.

{\color{blue}It was briefly mentioned in~\cite{yagle1999one} \eldar{without proof} that if the signal has a strong first component, then it is minimum phase. Here we propose {\em adding} an impulse as a means of restoring identifiability for \emph{arbitrary} signals.}
This concept is very similar to {\em holography}, which was invented by Gabor in 1948~\cite{gabor1948new}, and was awarded the Nobel Prize in Physics in 1971. One form of holography, spectral interferometry, inserts an impulse exactly in the form of (\ref{eq:holography1}) or (\ref{eq:holography2}), and then measures the squared magnitude of the Fourier transform of the combined signal~\cite{takeda1982fourier,diels1985control}. Despite the similarities in obtaining the measurements, the theory behind our approach and holography is very different, in several ways:
\begin{enumerate}
\item Holography constrains the length of the zeros between $\s$ and $\delta$ to be at least $N$, the length of the signal, which is not required in our method; in fact, in terms of the number of measurements needed, the shorter the better.
\item We require $|\delta|\geq\|\s\|_1$, whereas holography does not have any restriction on the energy of the impulse.
\item We provide an optimization based framework to recover the underlying signal directly in a robust fashion by exploiting the minimum phase property which is not part of the framework in holography.
\end{enumerate}

\section{Scalable algorithms}\label{sec:5}

In this section, we briefly describe how to directly use existing PhaseLift solvers to obtain guaranteed rank one solutions efficiently. Due to the extra memory required using SDP-relaxation methods for large-scale problems, we then design a new algorithm \eldar{called CoRK} that solves the approximate formulation (\ref{prob:conv1}) with high efficiency, in terms of both time and memory.

\subsection{SDP based algorithms}
In the original PhaseLift paper \cite{candes2013phase}, the authors used TFOCS~\cite{becker2011templates} to solve the SDP~(\ref{prob:phaselift}), which is a relatively efficient way to solve large scale SDPs. As we discussed in Section~\ref{sec:sdp}, we can ensure a rank one minimum phase solution by solving (\ref{prob:phaselift1}) with an appropriate choice of $\lambda$. {\color{blue}We find such a $\lambda$} via bisection: first we solve (\ref{prob:phaselift1}) with $\lambda=0$ to get the optimal value of the fitting term; next we solve it with a $\lambda$ large enough that the solution is guaranteed to be rank one, but with a possibly larger fitting error; finally we bisect $\lambda$ until both criteria are met. Since TFOCS allows to specify initializations, after we solve (\ref{prob:phaselift1}) for the first time, subsequent evaluations can be performed much more rapidly via warm start. Moreover, there is a wide range of $\lambda$ that result in both a rank one solution and an optimal fitting error, so that the overall increase in time required is small compared to that of solving a single PhaseLift problem.

The disadvantage of SDP based algorithms is that one cannot avoid lifting the problem dimension to $O(N^2)$. If $N$ is moderately large, on the order of a few thousand, then it is very difficult to apply SDP based methods in practice.

\subsection{CoRK: An efficient ADMM method}
We now propose a new algorithm for solving the approximate formulation (\ref{prob:conv1}), which avoids the dimension lifting, and easily handles problem sizes up to millions. Since (\ref{prob:conv1}) is a convex quadratic program, there are plenty of algorithms to solve it reliably. For example, a general purpose interior-point method solves it with worst case complexity $O(N^{3.5})$ without taking any problem structure into account. Nevertheless, noticing that all of the linear operations in (\ref{prob:conv1}) involve DFT matrices, {\color{blue}we aim to further recude the complexity by exploiting the FFT operator.}

We propose solving (\ref{prob:conv1}) using the alternating direction method of multipliers (ADMM)~\cite{boyd2011distributed}. To apply ADMM, we first rewrite (\ref{prob:conv1}) by introducing an auxiliary variable $\z\in\R^L$:
\begin{equation}\label{prob:admm}
\begin{aligned}
\minimize_{\r\in\C^N,\z\in\R^L}~~ & \left\|\b-\real{\F_M\Itilde\r}\right\|^2 + {\cal I}_+(\z) \\
\st~~~ & \real{\F_L\Itilde\r} = \z,
\end{aligned}
\end{equation}
where
\[
{\cal I}_+(\z) = \begin{cases}
0, & \z \geq 0, \\
+\infty, & \text{otherwise,}
\end{cases}
\]
is the indicator function of the non-negative orthant in $\R^L$.
Treating $\r$ as the first block of variables, $\z$ as the second, and $\Re\{\F_L\Itilde\r\} = \z$ as the coupling linear equality constraint with scaled Lagrange multiplier $\bm{u}$, we arrive at the following iterative updates:
\begin{equation}\label{alg:admm}
\boxed{
\begin{aligned}
\r &\leftarrow \frac{1}{M+\rho L}\left(\F_M^H\b + \rho\F_L^H(\z-\bm{u})\right), \\
\z &\leftarrow \max\left( 0, \real{\F_L\Itilde\r} + \bm{u} \right), \\
\bm{u} &\leftarrow \bm{u} + \real{\F_L\Itilde\r} - \z,
\end{aligned}
}
\end{equation}
\eldar{where $\rho$ is a constant described below.}
The derivation of the algorithm is given in Appendix~\ref{appx:admm}.

All the operations in (\ref{alg:admm}) are taken element-wise or involve FFT computations of length $L$. This leads to a complexity $O(L\log L)$, and effectively $O(N \log N)$ if $L$ is chosen as $O(N)$.
{\color{blue}In contrast, a naive implementation of ADMM for quadratic programming requires solving a least-squares problem in every iteration~\cite{ghadimi2015optimal}, leading to an $O(MN^2)$ per-iteration complexity, which is significantly higher.}

In terms of convergence rate, it is shown in~\cite[Theorem 3]{ghadimi2015optimal} that our ADMM approach will converge linearly, for all values of $\rho > 0$. The same reference provides an optimal choice of $\rho$ that results in the fastest convergence rate, for the case when there are fewer inequality constraints than variables in the first block. However, this requirement unfortunately is not fulfilled for problem (\ref{prob:conv1}). Inspired by the choice of $\rho$ in \cite{ghadimi2015optimal}, we found empirically that by setting $\rho=M/L$, the proposed iterates (\ref{alg:admm}) converge very fast, and at a rate effectively independent of the problem dimension. {\color{blue}For practical purposes, the convergence rate we have achieved is good enough (typically in less than 100 iterations), but there exists pre-conditioning methods to further accelerate it~\cite{giselsson2016linear}.}

As explained in Section~\ref{sec:kolmo}, this formulation blends well with Kolmogorov's method for spectral factorization. We thus \eldar{refer to our} approach for solving the 1D Fourier phase retrieval problem (\ref{prob:1}) as the \emph{auto-\underline{co}rrelation \underline{r}etrieval -- \underline{K}olmogorov factorization} ({\bf CoRK}) algorithm.

\section{Summary of the proposed method}\label{sec:6}
Throughout the paper, we divided 1D Fourier phase retrieval into several steps (measurement, formulation, algorithms, etc.), and for each step discussed several solution methods. For clarity and practical purposes, we summarize our approach below based on specific choices we found to be efficient. 

Given an arbitrary vector $\s$, 
{\color{blue}we propose the following steps if it is possible to insert an impulse to the signal before measuring its Fourier intensity; otherwise, only step 2 will be invoked, in which case we are guaranteed to solve the maximum likelihood formulation optimally, but the estimation error \eldar{may still be large}.}
\begin{enumerate}
\item Construct $\smin$ by inserting $\delta$ in front of $\s$, i.e.,
\[
\smin = [~\delta~s_0~s_1~...~s_{N-1}~]^T,
\]
such that $|\delta|>\|\s\|_1$. Take the $M$-point DFT of $\smin$, where $M>2N$, and measure its squared magnitude $\b=|\F_M\smin|^2$.

\item Apply CoRK as follows:
\begin{enumerate}
\item Formulate the least-squares problem with respect to the auto-correlation of $\smin$ as in (\ref{prob:conv1}),
\begin{align*}
\minimize_{\r\in\C^{N+1}}~~ & \left\|\b - \real{\F_M\Itilde\r}\right\|^2 \\
\st~~ & \real{\F_L\Itilde\r} \geq 0,
\end{align*}
by picking $L$ as the smallest power of $2$ that is greater than $32N$. Solve this problem using the iterative algorithm (\ref{alg:admm}), repeated here by setting $\rho=M/L$:
\begin{align*}
\r &\leftarrow \frac{1}{2}\left(\frac{1}{M}\F_M^H\b + \frac{1}{L}\F_L^H(\z-\bm{u})\right), \\
\z &\leftarrow \max\left( 0, \real{\F_L\Itilde\r} + \bm{u} \right), \\
\bm{u} &\leftarrow \bm{u} + \real{\F_L\Itilde\r} - \z.
\end{align*}

\item Extract the minimum phase signal that generates the correlation $\r$ by Kolmogorov's method, using the same $L$, as follows (also given in Section~\ref{sec:kolmo}):
\begin{align*}
\bm{\gamma} &= \frac{1}{2}\log\real{\F_L\Itilde\r},\\
\bm{\phi} &= \F_L\bm{\gamma},\\
{\color{blue}\varphi_n} &= \begin{cases}
0, & n = 0, L/2, \\
-j\phi_n, & n = 1,2,...,L/2-1, \\
 j\phi_n, & n = L/2+1, ..., L-1,
\end{cases}\\
\bm{\eta} &= \frac{1}{L}\F_L^H\bm{\varphi},\\
\x &= (1/L)\F_L^H\exp(\bm{\gamma}-j\bm{\eta}).
\end{align*}
\end{enumerate}

\item Obtain an estimate $\hat{\s}$ via
\[
\hat{s}_n = x_{n+1}, \quad n = 0, 1, ..., N-1.
\]
\end{enumerate}

Often, we have prior information that the signal of interest $\s$ is real. This implies that the auto-correlation is also real so that we need to restrict the domain of $\r$ and/or $\X$ in problem (\ref{prob:conv1}) or (\ref{prob:phaselift1}) to be real. For the $z$-transform of a real signal, the zeros are either real or come in conjugate pairs, which does not help the non-uniqueness of the solution if we directly measure the 1D Fourier magnitude. In Kolmogorov's SF method, if the input $\r$ is real and a valid correlation, then $\bm{\gamma}+j\bm{\eta}$ is conjugate symmetric, so that it is guaranteed to output a real valued signal. As for the new measurement system, all the claims made for constructing minimum/maximum phase signals still hold, \eldar{when} the signal is restricted to be real. In Appendix~\ref{appx:admm} we show how to modify the ADMM method (\ref{alg:admm}) to accommodate real signals by adding an additional projection to the real domain in the $\r$ update.

When $\x$ is real, the DFT has conjugate symmetry
\[
X(e^{-j2\pi m/M}) = X^*(e^{-j2\pi(M-m)/M}),
\]
meaning for the squared magnitude, $b_m = b_{M-m}$. Therefore, if we take $M$ samples between $[0,2\pi]$, then only the first $M/2$ measurements provide useful information. Consequently, we still need $M>2N$ measurements sampled between $[0,2\pi]$ to ensure identifiability.

\section{Simulations}\label{sec:7}
We now demonstrate the algorithms and the new measurement system we proposed via simulations. We first show that both algorithms, iteratively solving (\ref{prob:phaselift1}) while bisecting $\lambda$ and solving (\ref{prob:conv1}) followed by Kolmogorov's method, are able to solve the original problem (\ref{prob:1}) optimally, meaning the cost attains the lower bound given by PhaseLift. Then, we show that \eldar{applying} the new measurement system proposed in Section~\ref{sec:add_impulse} we can uniquely identify an arbitrary 1D signal, whereas directly measuring the over-sampled Fourier intensity does not recover the original input, regardless of the algorithms being used. All simulations are performed in MATLAB on a Linux machine.

\subsection{Minimizing the least-squares error}\label{sec:sim1}
We first test the effectiveness of the proposed algorithms on random problem instances. Fixing $N=128$, we randomly set $M$ as an integer between $[2N,8N]$, and generate $\b$ from an i.i.d. uniform distribution between $[0,1]$. We compare the following algorithms:
\begin{itemize}
\item {\bf PhaseLift.} Using TFOCS~\cite{becker2011templates} to solve problem (\ref{prob:phaselift}) with $\lambda=0$. This in general does not give a rank one solution, therefore only serves as a theoretical lower bound on the minimal least-squares error (\ref{prob:1}).
\item {\bf PhaseLift-PC.} The leading principal component of the plain PhaseLift solution.
\item {\bf PhaseLift-SF.} Iteratively solving (\ref{prob:phaselift1}) while bisecting $\lambda$ until an equivalent rank one solution for (\ref{prob:phaselift}) is found, again using TFOCS. Each time TFOCS is initialized with the solution from the previous iteration for faster convergence. As we proved, this method is guaranteed to achieve the lower bound given by PhaseLift. 
\item {\bf CoRK.} Proposed method summarized in the second step of Section~\ref{sec:7}. For both steps, $L$ is set to be the smallest power of $2$ that is greater than $32N$.
\item {\bf Fienup-GS.} Fienup's algorithm~\cite{fienup1978reconstruction} \eldar{with} 1000 iterations, then refined by the GS algorithm until the error defined in (\ref{prob:GS}) converges.
\end{itemize}


The optimality gaps between the minimal error in (\ref{prob:1}) obtained by the aforementioned methods and the theoretical lower bound given by PhaseLift are shown in Fig.~\ref{fig:1}; The running time of the different methods is provided in Fig.~\ref{fig:2}, for the 100 Monte-Carlo trials we tested. As we can see, PhaseLift-SF provides an optimal rank one solution, although it takes more time compared to solving one single PhaseLift problem. Since PhaseLift-SF only provides a solution that is ``numerically'' rank one, when we take its rank one component and evaluate the actual fitting error (\ref{prob:1}), it is still a little bit away from the theoretical PhaseLift lower bound, as shown in the red circles in Fig.~\ref{fig:1}. On the other hand, the proposed CoRK method is able to approximately solve the problem with high accuracy (a lot of times even better than PhaseLift-SF) in a very small amount of time (shorter than the standard Fienup-GS algorithm). The conventional Fienup-GS method and the leading principal component of the PhaseLift solution (in general not close to rank one) do not come close to the PhaseLift lower bound.

\begin{figure}[!t]
\centering
\includegraphics[width=.7\textwidth]{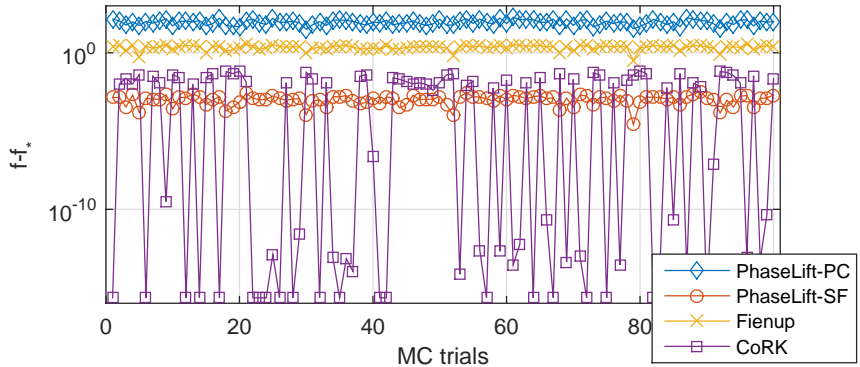}
\caption{Optimality gaps from the PhaseLift lower bound in each Monte-Carlo trial, cf. Section~\ref{sec:sim1}.}
\label{fig:1}
\end{figure}
\begin{figure}[!t]
\centering
\includegraphics[width=.7\textwidth]{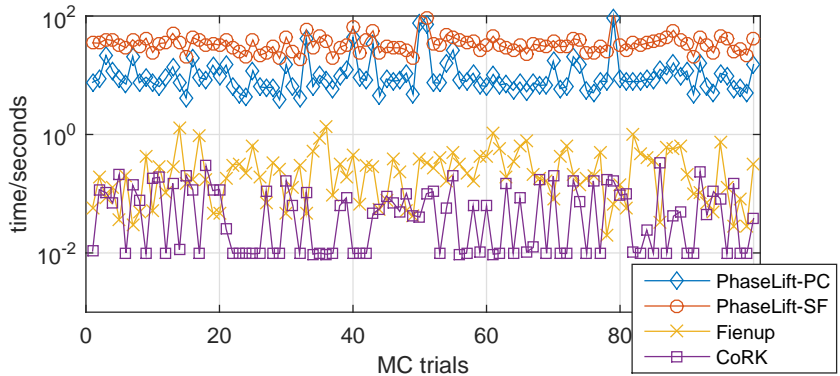}
\caption{Time consumed in each Monte-Carlo trial, cf. Section~\ref{sec:sim1}.}
\label{fig:2}
\end{figure}

\subsection{Estimation performance}\label{sec:sim2}
We now verify that our proposed measurement technique, described in Section~\ref{sec:add_impulse}, is able to recover a signal $\s$ up to global phase ambiguity. As we have shown in the previous simulation, the CoRK method performs similar to those based on PhaseLift, but with much more efficiency. Therefore, we only compare CoRK and Fienup's algorithm as baselines.
In each Monte-Carlo trial, we first set $N$ as a random integer between $[1,1024]$, and then \eldar{choose} $M$ as the smallest power of $2$ that is larger than $4N$. A random signal $\s\in\C^N$ is then generated with elements drawn from i.i.d. $\CN(0,1)$. Two kinds of measurements are collected for performance comparison:
\begin{itemize}
\item {\bf Direct:} directly measure $|\F_M\s|^2$;
\item {\bf Minimum Phase:} Construct a minimum phase signal $\smin$ as in (\ref{eq:smin}) with $\delta=3N$, and then measure $|\F_M\smin|^2$.
\end{itemize}

For the minimum phase measurements, when Fienup's algorithm is used, we add an additional step in which we use the solution to generate an auto-correlation sequence and then apply spectral factorization (Kolmogorov's method) to obtain a minimum phase signal with the same fitting error. We denote this approach by {\bf Fienup-SF}. We \eldar{employ} a simple prior that the added impulse $\delta$ is real and positive to resolve the global phase ambiguity: after obtaining the minimum phase solution, the result is first rotated so that the first entry is real and positive, and then this entry is deleted to obtain an estimate $\hat{\s}$. For direct measurements, the estimation error is defined as
\[
\min_{|\psi|=1}\|\s - \psi\hat{\s}\|^2.
\]
The estimation error in each of the 100 Monte-Carlo trials is shown in Fig.~\ref{fig:3}. We obtain perfect signal recovery when the new measurement system is used together with \eldar{the CoRK recovery algorithm. This }is never the case for direct measurements, even though the fitting error $\|\b-|\F_M^H\x|^2\|^2$ is always close to zero. For the new measuring system, CoRK obtains a solution with much higher accuracy, and lower computation time (shown in Fig.~\ref{fig:4}), compared to the widely used Fienup's algorithm.

\begin{figure}[!t]
\centering
\includegraphics[width=.7\textwidth]{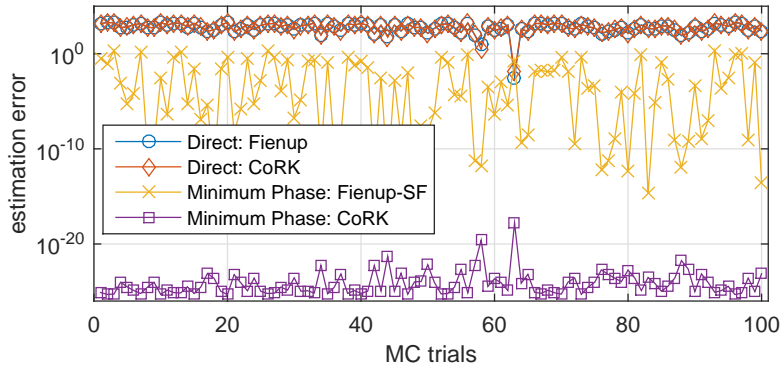}
\caption{Estimation error in each Monte-Carlo trial for the first simulation in Section~\ref{sec:sim2}.}
\label{fig:3}
\end{figure}
\begin{figure}[!t]
\centering
\includegraphics[width=.7\textwidth]{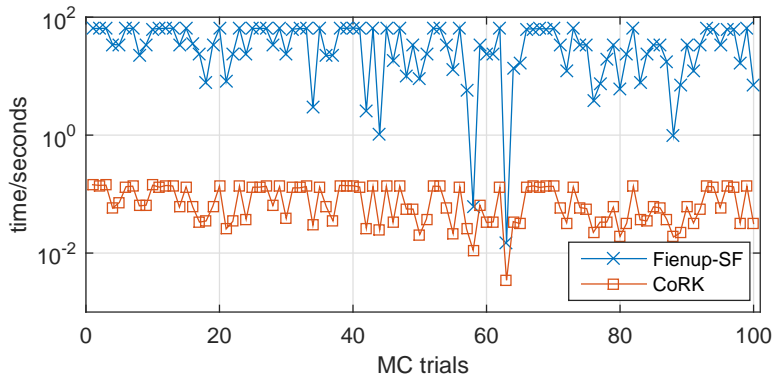}
\caption{Computation time in each Monte-Carlo trial (with the new measurement system) for the first simulation in Section~\ref{sec:sim2}.}
\label{fig:4}
\end{figure}

The new measuring system, together with the proposed CoRK method or Fienup's algorithm followed by spectral factorization, is also robust to noise, in the sense that the mean squared error (MSE) $\textbf{E}\|\s-\hat{\s}\|^2$ can essentially attain the Cram{\'e}r-Rao bound (CRB). The CRB for the phase-retrieval problem is derived in~\cite{crlb4pr}, and \eldar{is used here} to benchmark the performance of our new technique, which is the only method that can guarantee perfect signal reconstruction in the noiseless case. The CRB results in~\cite{crlb4pr} are with respect to the real and imaginary part of the signal being measured, in our case $\smin$. We therefore sum over the diagonals of the pseudo-inverse of the Fisher information matrix except for the $1$st and $n+1$st entries, which corresponds to the real and imaginary parts of $\delta$, and define that as the CRB for $\textbf{E}\|\s-\hat{\s}\|^2$. Furthermore, since the CRB is dependent on the true value of $\smin$, we show the results with respect to a fixed signal $\s$ in order to keep the CRB curve consistent with how we change one parameter setting of the simulation.

We set $N=1024$ and generate a fixed signal $\s\in\C^N$ from i.i.d. $\CN(0,1)$. Similar to the previous simulation, $\smin$ is constructed according to (\ref{eq:smin}) with $\delta=3N$, so that it is minimum phase with very high probability. White Gaussian noise with variance $\sigma^2$ is added to the squared magnitude of the Fourier transform of $\smin$. The signal-to-noise ratio (SNR) is defined as $10\log_{10}(\||\F_M\smin|^2\|^2/M\sigma^2).$
The performance is plotted in Fig.~\ref{fig:4}, where we show the normalized error $\|\s-\hat{\s}\|^2/\|\s\|^2$ versus the normalized CRB (original CRB divided by $\|\s\|^2$), averaged over 100 Monte-Carlo trials. On the left, we fix SNR$=40$dB, and increase the number of measurements $M$ from $2N$ to $16N$. On the right, we fix $M=8N$, and increase the SNR from $30$dB to $60$dB. The SNR may seem high here, but notice that most of the signal power is actually concentrated in the artificially added impulse $\delta$, so that the actual noise power is much higher comparing to that of $\s$ {\it per se}. In all cases the MSE obtained from our proposed method is able to attain the CRB sharply, even for as few as $M=2N$ measurements.

\begin{figure}[!t]
\centering
\includegraphics[width=.8\textwidth]{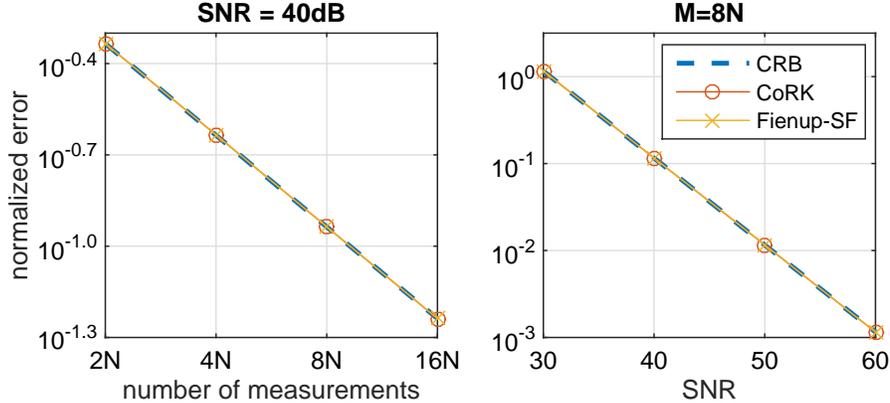}
\caption{Normalized MSE $\|\s-\hat{\s}\|^2/\|\s\|^2$ vs. normalized CRB using the new measurement system, where we increased the number of measurements on the left, and SNR on the right.}
\label{fig:5}
\end{figure}

\section{Conclusion}\label{sec:8}
We studied the phase retrieval problem with 1D Fourier measurements, a classical estimation problem that has challenged researchers for decades. Our contributions to this challenging problem are as follows:
\begin{itemize}
\item \emph{Convexity.} We showed that the 1D Fourier phase retrieval problem can be solved by a convex SDP.
\item \emph{Uniqueness.} We proposed a simple measurement technique that adds an impulse to the signal before measuring the Fourier intensities, so that any signal can be uniquely identified.
\item \emph{Algorithm.} We developed CoRK -- a highly scalable algorithm to solve this problem, which was shown in our simulations to outperform all prior art.
\end{itemize}

{\color{blue}In terms of future research directions, it is interesting to investigate how to incorporate constraints such as sparsity, non-negativity, or bandlimitedness into the auto-correlation parametrization. It is also tempting to consider extending our hidden convexity result and fast algorithm from 1D to other Fourier-based phase retrieval problems, for example the 2D case, which is identifiable, but difficult to solve.
}

\appendix

\section{Equivalence of PhaseLift and Problem~(\ref{prob:conv2})}\label{appx:sdp}
We show that the exact convex reformulation (\ref{prob:conv2}) is in fact equivalent to PhaseLift (\ref{prob:phaselift}) without the trace regularization if we eliminate the variable $\r$ in (\ref{prob:conv2}). A similar claim is given in~\cite{konar2015hidden} for the more general Toeplitz QCQP, but we show this again here in our context for completeness.

Consider the $m$th entry in the vector $\Re\{\F_M\Itilde\r\}$, and replace $\r$ by its trace parameterization
\[
r_k = \tr{\T_k\X},~ k = 0, 1, ..., N-1.
\]
We then have that
\begin{eqnarray}
\lefteqn{\real{\f_m^H\Itilde\r} = r_0 + 2\sum_{k=1}^{N-1}\real{\phi^{km}r_k}} \nonumber \\
&= &r_0 + \sum_{k=1}^{N-1}\left(\phi^{km}r_k^{} + \phi^{-km}r_k^*\right) \nonumber \\
&= &\tr{\T_0\X} + \sum_{k=1}^{N-1}\left(\phi^{km}\tr{\T_k\X}+ \phi^{-km}\tr{\T_k^T\X} \right)\nonumber \\
&= & \tr{\f_m^{}\f_m^H\X},
\end{eqnarray}
where the last step is a result of the fact that
\[
\T_0 + \sum_{k=1}^{N-1}\left(\phi^{km}\T_k^{} + \phi^{-km}\T_k^T\right) = \f_m^{}\f_m^H.
\]

We  now eliminate $\r$ in (\ref{prob:conv2}) as follows:
\begin{eqnarray}
  \lefteqn{\left\|\b - \real{\F_M\Itilde\r}\right\|^2
 = \sum_{m=0}^{M-1}\left(b_m-\real{\f_m^H\Itilde\r}\right)^2 }\nonumber \\
& =& \sum_{m=0}^{M-1}\left(b_m-\tr{\f_m^{}\f_m^H\X}\right)^2,
\end{eqnarray}
which is exactly the cost function for PhaseLift~(\ref{prob:phaselift}) without the trace regularization.

\section{Combining PhaseLift with spectral factorization}\label{appx:phaselift-sf}
As we explained in the paper, 1D Fourier phase retrieval can be solved exactly using SDP in the following two steps. First, solve problem (\ref{prob:conv2}) (repeated here),
\begin{equation}\label{eq:a1}
\begin{aligned}
\minimize_{\r\in\C^N,\X\in\H^N_+}~~ & \left\|\b - \real{\F_M\Itilde\r}\right\|^2 \\
\st~~~~ & r_k = \tr{\T_k\X}, ~~k = 0, 1, ..., N-1.
\end{aligned}
\end{equation}
Denote $\r_\star$ as the optimal solution of (\ref{eq:a1}), which is the unique solution since the cost of (\ref{eq:a1}) is strongly convex with respect to $\r$. Next, perform an SDP-based spectral factorization by computing the solution to
\begin{equation}\label{eq:a2}
\begin{aligned}
\maximize_{\X\in\H_+^{N}}~~ & X_{00} \\
\st~~~ & r_{\star k} = \tr{\T_k\X},  k = 0, 1, ..., N-1,
\end{aligned}
\end{equation}
and let the optimal solution of (\ref{eq:a2}) be $\X_\star$. Note that $\X_\star$ is the unique solution for (\ref{eq:a2}), and it is rank one~\cite{dumitrescu2007positive}. Here we want to show that these two steps can actually be combined into one, as shown in the following proposition.
\begin{proposition}\label{ppst:phaselift-sf}
Consider the following SDP
\begin{equation}\label{eq:a3}
\minimize_{\X\in\H_+^{N}}~~ \sum_{m=0}^{M-1}\left(b_m-\tr{\f_m^{}\f_m^H\X}\right)^2 - \lambda X_{00}.
\end{equation}
There exists some positive $\lambda$ such that the solution of (\ref{eq:a3}) is guaranteed to be $\X_\star$, which is also the solution of (\ref{eq:a2}) with $\r_\star$ being the solution of (\ref{eq:a1}).
\end{proposition}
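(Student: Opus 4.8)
The plan is to work with the $\r$-eliminated form of the problem. By Appendix~\ref{appx:sdp} the fitting term in (\ref{eq:a3}) equals $\|\b-\real{\F_M\Itilde\r}\|^2$ under $r_k=\tr{\T_k\X}$, so writing $\mathcal A(\X)=(\tr{\T_0\X},\dots,\tr{\T_{N-1}\X})$ for the linear auto-correlation map and $\tilde f(\r)=\|\b-\real{\F_M\Itilde\r}\|^2$, problem (\ref{eq:a3}) is $\minimize_{\X\in\H_+^N}\ \tilde f(\mathcal A(\X))-\lambda X_{00}$. First I would minimize over all $\X$ sharing a fixed auto-correlation $\r$: since the fit sees $\X$ only through $\r$, the inner problem is exactly the spectral-factorization SDP (\ref{eq:a2}), whose value I denote $\Phi(\r)=\max\{X_{00}:\X\succeq0,\ \mathcal A(\X)=\r\}$. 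By \cite{dumitrescu2007positive} the inner maximizer is rank one and minimum phase, so rank-one-ness is automatic for every $\lambda$, and $\Phi$ is concave as the value of a conic program in its right-hand side. This collapses (\ref{eq:a3}) to the scalar-parametrized convex program $\minimize_{\r\in\mathcal R}\ \tilde f(\r)-\lambda\,\Phi(\r)$ over the cone $\mathcal R$ of valid finite auto-correlation sequences, and the whole claim reduces to showing that for some $\lambda>0$ this reduced program is minimized exactly at $\r_\star$, the unique minimizer of $\tilde f$ over $\mathcal R$; its rank-one factor is then precisely $\X_\star$.

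Second, I would recast the existence of such a $\lambda$ as an exact-penalty statement. Since $\tilde f$ is strongly convex with unique minimizer $\r_\star$ and $\Phi$ is concave with $\Phi(\r_\star)=X_{\star,00}$, the reduced objective is convex, and $\r_\star$ minimizes it iff $0\in\partial(\tilde f-\lambda\Phi)(\r_\star)+N_{\mathcal R}(\r_\star)$. Introducing the optimal-value function $v(t)=\min\{\tilde f(\r):\r\in\mathcal R,\ \Phi(\r)\geq t\}$, I would show $v$ is convex, equals $p_\star:=\tilde f(\r_\star)$ for $t\leq X_{\star,00}$, and that any $\lambda$ strictly below the right-derivative $v'_+(X_{\star,00})$ forces the minimizer of $v(t)-\lambda t$ to sit at $t=X_{\star,00}$, i.e.\ at $\r_\star$. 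The $-\lambda X_{00}$ term then selects, among the optimal-fit matrices $\{\X\succeq0:\mathcal A(\X)=\r_\star\}$, the unique maximizer of $X_{00}$, which is $\X_\star$. I would close by assembling an explicit dual certificate $\bm Z=\nabla_{\X}\tilde f(\mathcal A(\X_\star))-\lambda\,\bm e_0\bm e_0^H\succeq0$ with $\langle\bm Z,\X_\star\rangle=0$ from the dual of (\ref{eq:a1}) (which yields $\nabla_{\X}\tilde f(\mathcal A(\X_\star))\succeq0$, complementary to $\X_\star$) and the dual of the factorization SDP (\ref{eq:a2}).

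The hard part will be exactly this existence-of-$\lambda$ step, namely guaranteeing $v'_+(X_{\star,00})>0$: that raising $X_{00}$ above its optimal-fit value $X_{\star,00}$ costs an amount of fitting error bounded below by a strictly positive multiple of the gain. This is where the position of $\r_\star$ on $\mathcal R$ enters. If $\r_\star$ lies on the relative boundary (the optimal spectrum $R_\star(e^{j\omega})$ touching zero), then $\nabla\tilde f(\r_\star)$ supports $\mathcal R$ and the fitting cost grows at least linearly along feasible directions, yielding a finite threshold and a genuine interval of admissible $\lambda$. The delicate regime is when $\r_\star$ is interior with $\nabla\tilde f(\r_\star)=0$, where the marginal cost must be extracted from the curvature of $\tilde f$ against the first-order gain in $\Phi$, and where the complementary-slackness value $\langle\bm Z,\X_\star\rangle=-\lambda X_{\star,00}$ must be reconciled against the cone-optimality identity $\sum_m\big(b_m-\real{\f_m^H\Itilde\r_\star}\big)\real{\f_m^H\Itilde\r_\star}=0$. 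I would therefore concentrate the technical effort on quantifying this marginal cost and on producing the PSD dual certificate, since the remaining ingredients---the reduction, automatic rank-one-ness, concavity of $\Phi$, and selection of the minimum-phase factor---follow from standard convex-analytic and spectral-factorization facts already used in the paper.
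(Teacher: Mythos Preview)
Your route is substantially more elaborate than the paper's. The paper does not introduce $\Phi(\r)$, the value function $v(t)$, or any exact-penalty machinery; it simply adjoins the single constraint $X_{00}\geq X_{\star,00}$ to problem~(\ref{eq:a1}), observes that the resulting constrained problem has $(\r_\star,\X_\star)$ as its unique solution, and then invokes Lagrange duality to absorb that constraint into the objective as $-\lambda X_{00}$, with $\lambda$ taken to be the associated multiplier. Eliminating $\r$ via Appendix~\ref{appx:sdp} then gives~(\ref{eq:a3}) directly. So where you build a two-level reduction (inner SF-SDP, outer penalized fit) and analyze the marginal cost $v'_+$, the paper needs only one auxiliary constrained problem and one appeal to duality.

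That said, the difficulty you single out as ``the hard part'' is exactly the point the paper's short argument does not justify: the paper asserts that the optimal multiplier is strictly positive, whereas strong duality only delivers $\lambda\geq 0$. Your own analysis of the interior regime in fact shows why this is delicate: if $\r_\star\in\operatorname{int}\mathcal R$ so that $\nabla\tilde f(\r_\star)=0$, then $\tilde f$ grows only quadratically away from $\r_\star$ while $\Phi$ moves linearly, which forces $v'_+(X_{\star,00})=0$ in your parametrization and, equivalently, forces the Lagrange multiplier in the paper's argument to vanish. In that regime no strictly positive $\lambda$ makes $\r_\star$ a minimizer of $\tilde f-\lambda\Phi$, so neither your exact-penalty route nor the paper's dualization yields a strictly positive $\lambda$ with $\X_\star$ as the \emph{unique} solution of~(\ref{eq:a3}). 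Your proposal is therefore more honest than the paper's proof in flagging this, but the plan to ``quantify the marginal cost'' in the interior case cannot succeed as stated, because that marginal cost is zero.
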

\begin{proof}
Notice that $(\r_\star,\X_\star)$ is a feasible solution for (\ref{eq:a1}), and in fact $\r_\star$ is the unique solution. On the other hand, $\X_\star$ is only one solution among a set of solutions for (\ref{eq:a1}), but is the unique solution for problem (\ref{eq:a2}). Now consider the following problem
\begin{equation}\label{eq:a4}
\begin{aligned}
\minimize_{\r\in\C^N,\X\in\H^N_+}~~ & \left\|\b - \real{\F_M\Itilde\r}\right\|^2\\
\st~~~~ & r_k = \tr{\T_k\X}, ~~k = 0, 1, ..., N-1,\\
& X_{00} \geq X_{\star00}.
\end{aligned}
\end{equation}
It is easy to see that $(\r_\star,\X_\star)$ is its unique solution. From Lagrange duality, the above problem has the same solution as
\begin{equation}\label{eq:a5}
\begin{aligned}
\minimize_{\r\in\C^N,\X\in\H^N_+}~~ & \left\|\b - \real{\F_M\Itilde\r}\right\|^2 + \lambda(X_{\star00} - X_{00})\\
\st~~~~ & r_k = \tr{\T_k\X}, ~~k = 0, 1, ..., N-1,\\
\end{aligned}
\end{equation}
where $\lambda>0$ is the optimal Lagrangian multiplier with respect to the equality constraint $X_{00} = X_{\star00}$ in (\ref{eq:a4}).

Finally, by dropping the constant $\lambda X_{\star00}$ in the cost of (\ref{eq:a5}) and eliminating the variable $\r$ as we described in Appendix~\ref{appx:sdp}, we end up with the formulation (\ref{eq:a3}), or (\ref{prob:phaselift1}), leading to our conclusion that $\X_\star$ is the unique solution to (\ref{eq:a3}).
\end{proof}

Proposition~\ref{ppst:phaselift-sf} allows to solve 1D Fourier phase retrieval via solving (\ref{prob:phaselift1}), to which all methods provided by TFOCS can still be applied, and by tuning $\lambda$ we obtain an optimal solution that is also rank one.

\section{Derivation of Algorithm~(\ref{alg:admm})}\label{appx:admm}
We first review the alternating direction method of multipliers (ADMM), which is the tool used to derive algorithm (\ref{alg:admm}).

Consider the following optimization problem:
\begin{align*}
\minimize_{\x,\z}~~& f(\x) + g(\z), \\
\st~~ & \A\x + \B\z = \bm{c}.
\end{align*}
ADMM solves this problem using the following updates
\begin{align*}
\x &\leftarrow \arg\min_{\x} f(\x) + \rho\|\A\x + \B\z - \bm{c} + \bm{u}\|^2, \\
\z &\leftarrow \arg\min_{\z} g(\z) + \rho\|\A\x + \B\z - \bm{c} + \bm{u}\|^2, \\
\bm{u} &\leftarrow \bm{u} + \A\x + \B\z - \bm{c},
\end{align*}
\eldar{where $\rho>0$ is the step size.}
ADMM converges to an optimal solution as long as the problem is closed, convex, and proper. The flexible two-block structure of the algorithm often leads to very efficient and/or parallel algorithms that work well in practice.

We now apply ADMM to problem (\ref{prob:admm}), which leads to the following steps:
\begin{align*}
\r &\leftarrow \arg\min_{\r} \left\|\b \!-\! \real{\F_M\Itilde\r}\right\|^2 \!+
						\rho \left\|\real{\F_L\Itilde\r} \!-\! \z \!+\! \bm{u}\right\|^2 \\
\z &\leftarrow \arg\min_{\z} {\cal I}_+(\z) +
						\rho \left\|\real{\F_L\Itilde\r} - \z + \bm{u}\right\|^2 \\
\bm{u} &\leftarrow \bm{u} + \real{\F_L\Itilde\r} - \z
\end{align*}
The explicit update for $\z$ is very straight forward---it is simply a projection of the point
$\Re\{\F_L\Itilde\r\} + \bm{u}$
onto the non-negative orthant, leading to the update of $\z$ as shown in (\ref{alg:admm}).

We next derive the update for $\r$ in detail. First note that $\r$ is in general complex, so that the derivative needs to be taken with care.
To this end we treat $\r$ and $\r^*$ as independent variables and take derivatives with respect to them separately. This approach is referred to as the \emph{Wirtinger derivative}~\cite{wirtinger1927formalen}. Focusing on the first term of the $\r$ sub-problem, we can rewrite it as
\begin{equation}\label{eq:r-update}
\left\| \b - \frac{1}{2}\F_M\Itilde\r - \frac{1}{2}\F_M^*\Itilde\r^* \right\|^2.
\end{equation}
Taking the derivative with respect to $\r$ while treating $\r^*$ as an independent variable, we obtain
\begin{equation}\label{eq:grad}
\half\Itilde\F_M^H\F_M^{}\Itilde\r + \half\Itilde\F_M^H\F_M^*\Itilde\r^* - \Itilde\F_M^H\b.
\end{equation}

Recall that $\F_M$ represents the first $N$ columns of the DFT matrix, which are orthogonal to each other, so that $\F_M^H\F_M^{} = M\eye$. On the other hand, the columns of $\F_M^*$, except for the first one, come from the last $N-1$ columns of the same DFT matrix; if we assume $M\geq 2N$, then we have that all columns of $\F_M^*$ are orthogonal to columns of $\F_M$, except for the first one, which is equal to $\bm{1}$. Therefore, $\F_M^H\F_M^* = M\bm{E}_{00}$, where $\bm{E}_{00}$ has only one entry in the upper-left corner that is equal to one, and zeros elsewhere. Since $\Itilde=\diag{[~1~2~2~...~2~]}$, (\ref{eq:grad}) simplifies to
\[
\frac{M}{2}\Itilde\Itilde\r + \half\Itilde\bm{E}_{00}\Itilde\r^* - \Itilde\F_M^H\b
= M\Itilde\r - \Itilde\F_M^H\b,
\]
where we used the fact that $r_0$ is real. Similar expressions apply to the second term of the $\r$ sub-problem. By setting the gradient of the $\r$ sub-problem equal to zero, we have that
\[
M\Itilde\r - \Itilde\F_M^H\b + \rho\left( L\Itilde\r - \Itilde\F_L^H(\z-\bm{u}) \right) = 0,
\]
which leads to the update for $\r$ given in (\ref{alg:admm}). The gradient with respect to $\r^*$ can be shown to be exactly the conjugate of the gradient of $\r$. Therefore, equating it to zero gives the same result.

Finally, if we restrict $\r$ to be real, then the term (\ref{eq:r-update}) can be written as
\[
\left\| \b - \frac{1}{2}\F_M\Itilde\r - \frac{1}{2}\F_M^*\Itilde\r \right\|^2.
\]
Its gradient with respect to $\r$ is then
\begin{align*}
  & \half\Itilde\F_M^H\F_M^{}\Itilde\r + \half\Itilde\F_M^H\F_M^*\Itilde\r - \Itilde\F_M^H\b \\
  & + \half\Itilde\F_M^T\F_M^{}\Itilde\r + \half\Itilde\F_M^T\F_M^*\Itilde\r - \Itilde\F_M^T\b \\
=~& \frac{M}{2}\Itilde\Itilde\r + \frac{M}{2}\Itilde\bm{E}_{00}\Itilde\r - \Itilde\F_M^H\b \\
  & + \frac{M}{2}\Itilde\bm{E}_{00}\Itilde\r + \frac{M}{2}\Itilde\Itilde\r - \Itilde\F_M^T\b \\
=~& 2M\Itilde\r - 2\real{\Itilde\F_M^H\b}.
\end{align*}
Therefore, it is easy to see that the update of $\r$ that is real is
\[
\r = \real{\frac{1}{M+\rho L}\left(\F_M^H\b + \rho\F_L^H(\z-\bm{u})\right)},
\]
which is simply a projection of the original complex update onto the real domain.

\section*{Acknowledgment}
The authors would like to thank Prof. Moti Segev and Maor Mutzafi from the Technion for pointing out the similarities between our proposed measuring technique and holography, {\color{blue}and Pontus Giselsson for providing important references on the convergence rate of ADMM}.

\bibliographystyle{unsrt}
\bibliography{refs}

\end{document}